\documentclass[10pt,twoside,a4paper,reqno]{amsart}

\usepackage{amsfonts,amsthm,latexsym,mathtools,amsmath,amssymb,enumerate,appendix,color,hyperref,epsf}
\usepackage{bbm}
\usepackage[capitalize]{cleveref}
\usepackage{float}
\usepackage{xcolor, graphicx, tikz, tikz-cd}
\usepackage{graphicx}
\usepackage{subcaption}
\usepackage[utf8]{inputenc}

\newtheorem{theorem}{Theorem}
\numberwithin{theorem}{section}

\newtheorem{lemma}[theorem]{Lemma}

\newtheorem{conjecture}[theorem]{Conjecture}

\theoremstyle{remark}
\newtheorem{remark}[theorem]{Remark}
\newtheorem{example}[theorem]{Example}
\numberwithin{equation}{section}

\makeatletter
\renewcommand*\env@matrix[1][\arraystretch]{%
  \edef\arraystretch{#1}%
  \hskip -\arraycolsep
  \let\@ifnextchar\new@ifnextchar
  \array{*\c@MaxMatrixCols c}}
\makeatother

\newcommand{\R}{\mathbb{R}}

\newcommand{\C}{\mathbb{C}}
\newcommand{\N}{\mathbb{N}}

\newcommand{\T}{\mathbb{T}}
\newcommand{\prob}{\mathbb{P}}

\begin{document}

\title[Perturbations of twisted Toeplitz matrices]{Structured random perturbations of \\ tridiagonal twisted Toeplitz matrices}

\author[D.~Giandinoto]{Dario Giandinoto}
\address{Department of Mathematics, Stockholm University, SE-106 91 Stockholm,
      Sweden}
\email{dario.giandinoto@math.su.se}

\author[B.~Shapiro]{Boris Shapiro}
\address{Department of Mathematics, Stockholm University, SE-106 91 Stockholm,
      Sweden}
\email{shapiro@math.su.se}

\date{\today}

\keywords{twisted Toeplitz matrices, asymptotics of eigenvalues, tridiagonal matrices, non-self-adjoint operators, random perturbations}
\subjclass[2020]{Primary 15B52 Secondary 15B05, 47B28, 47B35, 47B36}

\begin{abstract}
Twisted Toeplitz matrices constitute a generalization of Toeplitz matrices in the sense that the entries on each diagonal no longer need to be constant, but are given by the values of a continuous function on a partition of $[0,1]$. We study the limiting statistical distribution of the eigenvalues of matrices of the form $R_n(a) = T_n(a) + \sigma_n X_n$, where $T_n(a)$ is a sequence of non-Hermitian tridiagonal twisted Toeplitz matrices, $X_n$ is a sequence of tridiagonal random matrices whose entries have mean $0$ and finite variance, and $\sigma_n\to0$. The limiting distribution turns out to be a two-dimensional measure which is in general different from the push-forward of the Lebesgue measure by the symbol. We also explain how the results could extend to banded non-Hermitian twisted Toeplitz matrices.
\end{abstract}

\maketitle

\section{Introduction}
\label{sec:intro}

Let $a:[0,1] \times \T \rightarrow \C$ be a bivariate function (here $\T$ denotes the unit circle in the complex plane) and assume that $a(x,z)$ is integrable with respect to the second variable. Then for $n \in \N $ the $n \times n$ \emph{twisted Toeplitz matrix} $T_n(a)$ associated to $a$ is defined by
\begin{align}
\label{eq:kms}
(T_n(a))_{i,j} = \hat{a}_{i-j} \left( \frac{\min(i,j)}{n} \right) && i,j = 1,2, \dots, n
\end{align}
where $\hat{a}_k(x)$ is the $k$-th Fourier coefficient of $a(x,\cdot)$,
\[ \hat{a}_k(x) = \frac{1}{2 \pi} \int_0^{2 \pi} a(x,e^{i t}) e^{-ikt} dt. \]
For instance, if $a(x,z) = c(x) z^{-1} + b(x) + d(x) z$ for some functions $b,c,d$, we have a tridiagonal twisted Toeplitz matrix:
\[ T_n(a) = \begin{pmatrix}[1.5]
b \big(\frac1n \big) & c\big(\frac1n \big) & & & & \\
d\big(\frac1n \big) & b\big(\frac2n \big) & c\big(\frac2n \big) & & & \\
 & d\big(\frac2n \big) & b\big(\frac3n \big) & c\big(\frac3n \big) & \\
 & & \ddots & \ddots & \ddots &  \\
 & & & d \big(\frac{n-2}{n} \big) & b \big(\frac{n-1}{n} \big) & c \big(\frac{n-1}{n} \big) \\
 & & & & d\big(\frac{n-1}{n} \big) & b(1)
\end{pmatrix}. \]

While the diagonals of a Toeplitz matrix are constant, the diagonals of a twisted Toeplitz matrix are slow-varying, in the sense that they are obtained through sampling functions on a partition of the interval $[0,1]$. We call $a(x,z)$ the \emph{symbol} of $T_n(a)$. It is straightforward to verify that $T_n(a)$ is a Hermitian matrix if and only if its Fourier coefficients satisfy
\[
\hat a_{-k}(x)=\overline{\hat a_k(x)}, \qquad x\in[0,1],\ k\in\mathbb Z.
\]
We observe that this condition is verified if and only if the function $a(x,z)$ is real-valued. Another immediate observation is that if $a(x,z)$ is independent of $x$ (i.e. $a(x,z)=a(z)$ for all $x \in [0,1]$), then $T_n(a)$ is simply the Toeplitz matrix with symbol $a(z)$. \par 

Matrices of the type \eqref{eq:kms} were considered for the first time by Kac, Murdock and Szeg\H{o} in \cite{KMS}, where they studied the LSD in the Hermitian case, under the assumption that $a(x,z)$ is continuous with respect to $x$. In this article they called \eqref{eq:kms} \emph{generalized Toeplitz matrices}. These matrices have been given a variety of names in the following literature, such as \emph{twisted Toeplitz}, \emph{KMS matrices}, \emph{Toeplitz-like},  or \emph{variable coefficient Toeplitz}. In \cite{Til}, Tilli introduced and studied a similar type of matrices, which he called \emph{locally Toeplitz}. Tridiagonal twisted Toeplitz matrices have also been called \emph{sampling Jacobi matrices} in \cite{BlSt1} and \cite{BlSt2}. \par 
The motivation behind studying twisted Toeplitz matrices stems from physics (\cite{BDHMW, Kac}) and differential equations (\cite{BoSh,Til}).
The spectral asymptotics of such matrices have been extensively studied in the case in which they are normal (i.e. they commute with their adjoint); for an illuminating summary of such results, see \cite{BLM}. However, as was underlined by the authors of \cite{BLM}, there are not many results in the non-normal setting, apart from a rough localization of the eigenvalues (see Theorem 2.11 of \cite{BLM}). To this day, the spectral asymptotics of non-normal twisted Toeplitz matrices remain a mystery, even in the tridiagonal case; the eigenvalues seem to distribute along analytic curves, like in the case of banded Toeplitz matrices, but an analogue of the Schmidt-Spitzer theorem \cite{ScSp} has not been found. The LSD of the eigenvalues has been found only in a couple of special cases (see \cite{BlSt1} and \cite{BlSt2}).  \par 
In this paper we will focus on describing the spectral asymptotics for \emph{structured perturbations} of twisted Toeplitz matrices: if $a(x,z)$ is the symbol of a banded twisted Toeplitz matrix $T_n(a)$, we consider the random matrix
\[ R_n(a) = T_n(a) + \sigma_n X_n, \]
where $X_n$ is a random matrix with the same band structure as $T_n(a)$, whose non-zero entries are i.i.d. random variables with mean $0$ and finite variance, while $\sigma_n$ is a sequence of positive real numbers converging to $0$. In this paper we obtain the spectral asymptotics in the case of tridiagonal twisted Toeplitz matrices; later we explain how we expect our results to extend to the banded case, but we do not have a proof of such statements. \par 
Random perturbations of banded Toeplitz matrices have been extensively studied, for Gaussian perturbations in \cite{SjVo} and for more general perturbations in \cite{BPZ}. In this case the distribution of the eigenvalues is shown to converge to the push-forward of the Lebesgue measure on the circle through the symbol. Similar asymptotics were shown to hold for triangular twisted Toeplitz matrices in \cite{BPZ2}, and in a recent article \cite{Noel} it was generalized to twisted Toeplitz matrices with symbol $a(x,z)$ which is smooth in $z$ and piecewise Hölder-continuous in $x$, discarding even the hypothesis of bandedness. \par 
All the results mentioned above treat full-matrix perturbations; in this article we will show how structured perturbations (tridiagonal perturbations in this case) can yield asymptotics different from both the deterministic case and the full-matrix perturbation. We present numerical examples in Figures \ref{fgr:unsym1} and \ref{fgr:unsym2} to show how the eigenvalues of $R_n(a)$ accumulate on sets which are quite different from the range of the symbol $a([0,1] \times \T)$, which according to \cite{Noel} is the limiting set for full-matrix perturbations. The example in Figure \ref{fgr:unsym2} showcases how it can happen that the two sets are completely disjoint. The limiting distributions, and hence the limiting sets of structured perturbations are described in our main results, as follows.

\begin{figure}[t]
\centering
\includegraphics[width=0.49\textwidth]{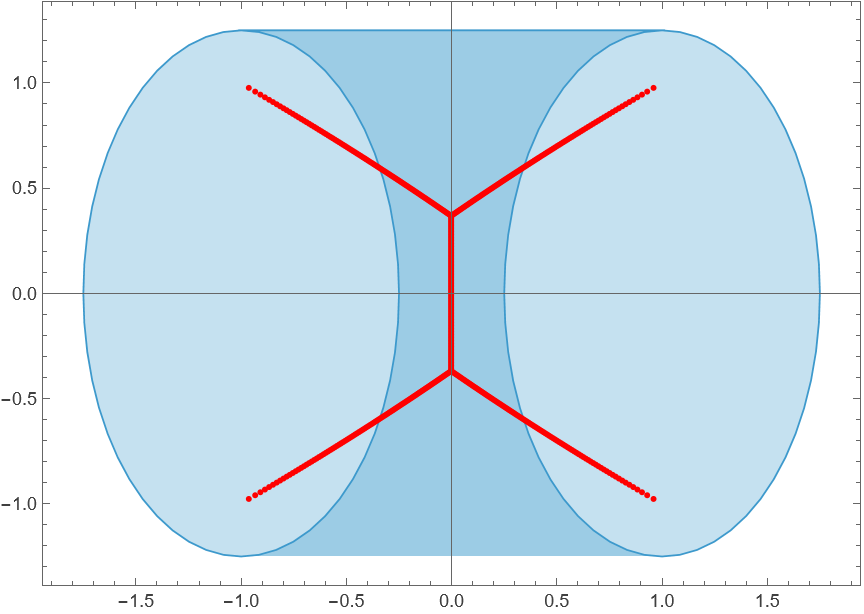}
\includegraphics[width=0.49\textwidth]{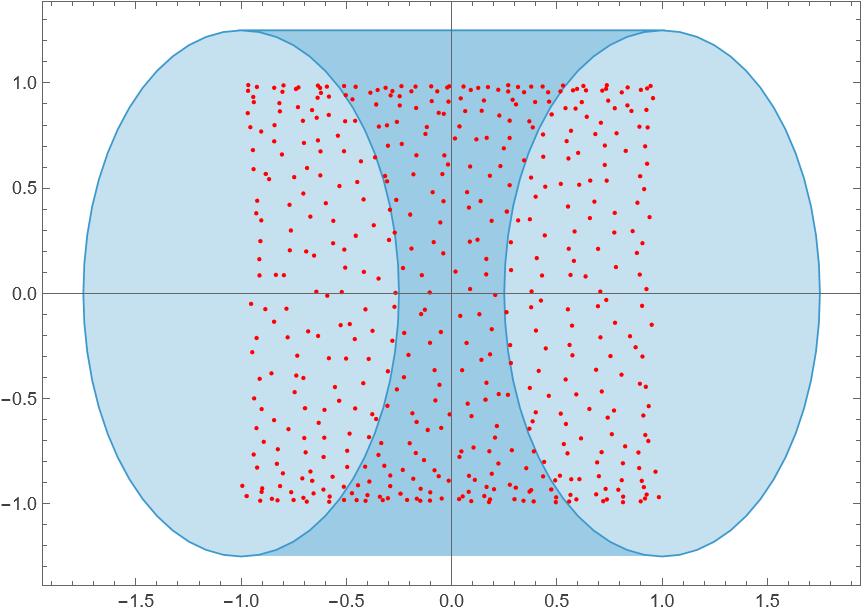}
\caption{In blue, the image of the symbol $a(x,z) = i z^{-1} + 1-2x + \frac{i}{4}z$. On the left, the eigenvalues of $T_{500}(a)$ are pictured in red, while on the right the eigenvalues of $R_{500}(a)$ are plotted. The distribution for the random perturbation is the centered binomial distribution $N(512,0.5)-265$, and $\sigma_n$ was chosen to be $\frac{1}{n}$.}
\label{fgr:unsym1}
\end{figure}

\begin{theorem}
\label{thm:random}
Consider the symbol defined on $[0,1] \times \T$
\[ a(x,z) = d(x) z^{-1} + b(x) + c(x) z, \]
where $b,c,d$ are continuous complex-valued functions on $[0,1]$. Let $\mu_n$ be the sequence of eigenvalue-counting measures
\[
\mu_n = \frac{1}{n} \sum_{\lambda \in \mathrm{sp}(R_n(a))} \delta_{\lambda}.
\]
Then $\mu_n$ converges weakly to the measure $\mu$ defined by
\[
\mu=\int_0^1 \alpha_x~dx,
\]
where $\alpha_x$ denotes the arcsine measure on the complex interval
\[
\bigl[b(x)-2\sqrt{d(x)c(x)},\, b(x)+2\sqrt{d(x)c(x)}\bigr].
\]

\end{theorem}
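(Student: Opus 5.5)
The plan is to run the log-potential (Hermitization) argument for the scalar function $\lambda\mapsto \frac1n\log|\det(R_n(a)-\lambda)|$, computing this determinant through $2\times 2$ transfer matrices. Write $b_k=b(k/n)$, and let $c_k,d_k$ be the off-diagonal entries of $T_n(a)$ in positions $(k,k+1)$ and $(k+1,k)$; the tridiagonal perturbation replaces them by $\tilde b_k=b_k+\sigma_n x_k$, $\tilde c_k=c_k+\sigma_n y_k$, $\tilde d_k=d_k+\sigma_n w_k$. The leading principal minors $D_k(\lambda)$ of $R_n(a)-\lambda$ satisfy the three-term recurrence $D_k=(\tilde b_k-\lambda)D_{k-1}-\tilde c_{k-1}\tilde d_{k-1}D_{k-2}$. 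Hence $\det(R_n(a)-\lambda)$ is an entry of the product $M_n\cdots M_1$, where
\[
M_k=\begin{pmatrix}\tilde b_k-\lambda & -\tilde c_{k-1}\tilde d_{k-1}\\ 1&0\end{pmatrix}.
\]
Only the diagonal and the products $\tilde c_k\tilde d_k$ enter, which already explains why the symbol is replaced by the data $(b,cd)$.

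\textbf{Step 1: identify the potential.} For fixed $x$, let $z_\pm(x,\lambda)$ be the roots of $z^2-(\lambda-b(x))z+c(x)d(x)=0$. Via the Joukowski map $\lambda-b=\sqrt{cd}\,(\zeta+\zeta^{-1})$ with $|\zeta|\ge1$, one has $\log\max(|z_+|,|z_-|)=\int\log|\lambda-t|\,d\alpha_x(t)$ for every $\lambda$, including the degenerate case $cd=0$, where $\alpha_x=\delta_{b(x)}$. Define
\[
U(\lambda)=\int_0^1\log\max(|z_+(x,\lambda)|,|z_-(x,\lambda)|)\,dx .
\]
Then $U$ is the log-potential of $\mu$, and $\frac1{2\pi}\Delta U=\mu$.

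\textbf{Step 2: reduce to pointwise convergence.} By the standard Hermitization lemma (in the form of Tao--Vu or Bordenave--Chafa\"i), it suffices to prove $\frac1n\log|\det(R_n(a)-\lambda)|\to U(\lambda)$ in probability for Lebesgue-a.e. $\lambda$, together with uniform integrability. The upper tail is harmless because $\|R_n(a)\|$ is bounded in probability, by finite variance and $\sigma_n\to0$. The excluded set of $\lambda$ is $\bigcup_x\mathrm{supp}\,\alpha_x$ together with the set where $|z_+|=|z_-|$, both of which have zero area; hence we may fix $\lambda$ with $|z_+(x,\lambda)|>|z_-(x,\lambda)|$ for all $x$ outside a null set.

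\textbf{Step 3: block decomposition and upper bound.} Cut $\{1,\dots,n\}$ into blocks of length $L$, with $L\to\infty$ slowly and $L/n\to0$. On each block the coefficients are within $o(1)$ of constants by continuity, and the perturbation has size $O(\sigma_n)$ with high probability. So the block product is a small perturbation of $M(x_0)^L$, whose norm is $e^{L\log|z_+(x_0)|+o(L)}$. Submultiplicativity then gives $\limsup\frac1n\log|\det|\le U(\lambda)$; near points where $z_\pm$ merge, one uses continuity of the spectral radius.

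\textbf{Step 4: lower bound (main obstacle).} One must show that the full product does not suffer cancellation: the vector $(D_k,D_{k-1})^T$ must not fall into the contracting direction of a block, and the final entry must not vanish to exponential order. This is exactly where the deterministic matrices fail, as Figure \ref{fgr:unsym1} shows, so the randomness must be used here. Within a block, $M(x_0)^L$ has expanding and contracting eigendirections with ratio $(|z_-|/|z_+|)^L$, exponentially small in $L$. The entering vector is a random function of the independent perturbations $x_k,y_k,w_k$ in the preceding block. I would prove an anti-concentration estimate: with probability at least $1-\varepsilon$, its angle to the contracting direction is at least $\sigma_n^{C}$. Since $\sigma_n$ is only polynomially small while the contraction is exponential in $L$, this costs $O(\log(1/\sigma_n))=o(L)$ in the exponent per block, provided $L\gg\log n$. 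For a general finite-variance law, I would get anti-concentration by conditioning on everything except a single entry $x_k$ near the block boundary. The angle is then a M\"obius function of $x_k$, and a non-degenerate random variable gives small-ball bounds for it via Kolmogorov--Rogozin.

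Summing the lower bounds over blocks, with a union bound or a law-of-large-numbers argument over the $n/L$ blocks (allowing a small fraction of bad blocks, whose loss is controlled by lower bounds on $\|M_k^{-1}\|^{-1}$), gives $\liminf\ge U(\lambda)$ in probability. It also gives the matching control on the small singular values needed for uniform integrability. I expect this anti-concentration and bad-block bookkeeping to be the technically hardest part, especially near points where $cd$ vanishes or where $|z_+|$ and $|z_-|$ are close, so that the spectral gap within a block degenerates.
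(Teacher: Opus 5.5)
Your route differs from the paper's, and the difference falls exactly where the paper's argument is incomplete.

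\textbf{What the paper does, and where it breaks.} The paper freezes blocks of size $k$ with $k\sigma_n^2\to0$ and controls each block by the deterministic cone argument of Theorem~\ref{thm:approx}. It glues the blocks via Lemma~\ref{lmm:2block} with interface factors $1-\Theta_k^{(j)}$. This gives convergence of $\frac1n\log|\det(R_n(a)-z)|$ only on compact subsets of $\C\setminus\Sigma$, after which the paper asserts that $\Sigma$ is Lebesgue-null. There are two problems with this.
\begin{enumerate}
\item Randomness is used only to make the entries small, so the argument would apply verbatim to $\sigma_n=0$.
\item $\Sigma\supseteq\Xi(a)=\operatorname{supp}\mu$, which typically has positive area. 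For the symbol of Figure~\ref{fgr:unsym1}, $b=1-2x$ and $cd=-1/4$, so $\Xi(a)$ is the square $[-1,1]\times[-1,1]$. A measure with two-dimensional support is not determined by its potential off the support.
\end{enumerate}
Moreover, iterating Lemma~\ref{lmm:2block} puts the accumulated ratio $\det(R_{jk-1}(a)-z)/\det(R_{jk}(a)-z)$ into the $j$-th interface factor, not the single-block ratio used in \eqref{eq:globalblock}. The two agree only while the global continuant vector stays in the expanding cone. This fails once $x$ crosses a point with $z\in[b(x)-2\sqrt{c(x)d(x)},\,b(x)+2\sqrt{c(x)d(x)}]$, because there the old dominant eigendirection becomes the contracting one.

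\textbf{What your approach adds.} Your Step~4 is precisely the missing ingredient: a random kick of size $\asymp\sigma_n$ along the new expanding direction, which takes over after $O(\log(1/\sigma_n))$ steps. Combined with the a.e.-$\lambda$ reduction of Step~2 (correctly stated, see (ii) below), it reaches $\lambda$ in the interior of $\Xi(a)$. The paper's argument identifies the potential only outside $\Xi(a)$. It is adequate only when $\Xi(a)$ has zero area, as in Figure~\ref{fgr:unsym2}, where $\Xi(a)\subset i\R$.

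\textbf{What you still need to repair.}

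(i) Step~4 needs $\log(1/\sigma_n)=o(n)$ (you silently assumed polynomial decay) and a non-degenerate entry law. Neither is in the statement, and neither can be dropped. If $X_n\equiv0$, or if $\sigma_n\le e^{-n^2}$, then with high probability the spectrum of $R_n(a)$ stays within $e^{-cn}$ of that of $T_n(a)$, so both have the same limit. That limit concentrates on curves (left panel of Figure~\ref{fgr:unsym1}) rather than following $\mu$. State these conditions as hypotheses.

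(ii) In Step~2, the claim that $\bigcup_x\operatorname{supp}\alpha_x$ has zero area is false; it is the same slip as the paper's. What is true, and what you actually use, is the Fubini statement: for a.e.\ $\lambda$ the set $\{x:\lambda\in\operatorname{supp}\alpha_x\}$ is null.

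(iii) Conditioning on all but one entry bounds the failure probability only by the largest atom of the law. For lattice laws such as those in the figures this is a fixed constant, not $\varepsilon$; Kolmogorov--Rogozin is a statement about sums. Instead, linearize the projective dynamics at the repelling direction. To first order, the deviation after $m$ steps is $q^{-m}\bigl(\theta_0+\sigma_n\sum_{i\le m}q^{i}L_i\bigr)$, with $q=z_-/z_+$ and $L_i$ linear in the $i$-th perturbation entries. Anti-concentrate this geometric sum at scale $t\sigma_n$ with $t>0$ fixed. Since subsequent kicks have size $\sigma_n$, a lower bound $\sigma_n^{C}$ with $C>1$ on the entering angle is not preserved.

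(iv) Do not pay for anti-concentration at every block boundary. With a per-block failure probability bounded below and a loss of order $L$ per failure, the total loss is of order $n$. Inside an $x$-interval where one root dominates with a uniform gap, the cone is inherited deterministically from block to block. So after discarding an $x$-set of small measure, only finitely many re-entries need randomness.

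(v) Finite variance does not make $\|R_n(a)\|$ bounded in probability. Take tightness from $\frac1n\sum_i|\lambda_i|^2\le\frac1n\|R_n(a)\|_{\mathrm{HS}}^2=O_{\prob}(1)$. Together with a.e.\ convergence in probability of $\frac1n\log|\det(R_n(a)-\lambda)|$, this already yields weak convergence, so no separate singular-value uniform integrability is needed.
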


\begin{figure}[t]
\centering
\includegraphics[width=0.49\textwidth]{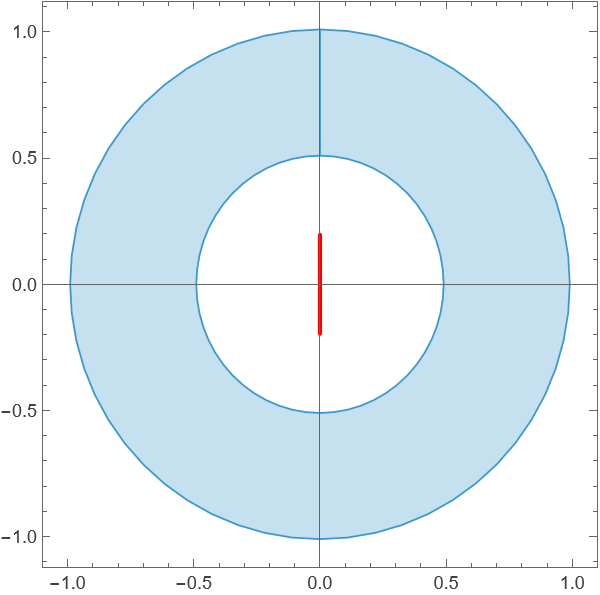}
\includegraphics[width=0.49\textwidth]{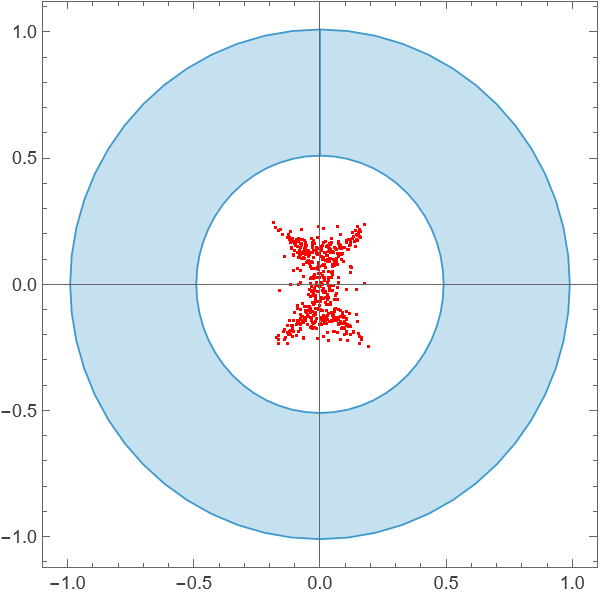}
\caption{In blue, the image of the symbol $a(x,z) = \frac{i}{x+1} z^{-1} + \frac{i}{x^2+100} z$. On the left, the eigenvalues of $T_{500}(a)$ are pictured in red, while on the right the eigenvalues of $R_{500}(a)$ are plotted. The distribution for the random perturbation is the same as in Figure \ref{fgr:unsym1}.}
\label{fgr:unsym2}
\end{figure}

We also obtain a very similar result where, instead of perturbing the entries of the matrix, we randomise the sampling points for our functions. Specifically, we consider $n$ uniform random variables on $[0,1]$
\[ U_1^{(n)}, \cdots, U_n^{(n)} \]
and the corresponding order statistics
\[ 0 \leq x_{1,n} \leq \cdots \leq x_{n,n} \leq 1. \]
We can then define the randomised twisted Toeplitz matrix simply by replacing the deterministic sampling points $i/n$ with the random sampling points $x_{i,n}$: for any symbol $a(x,z)$ on $[0,1]$ we define $\mathcal{T}_n(a)$ to be the matrix with entries

\begin{equation}
\label{eq:rkms}
(\mathcal{T}_n(a))_{i,j} = \hat{a}_{i-j} \bigl( x_{\min(i,j),n} \bigr), \qquad i,j = 1,2, \dots, n.
\end{equation}

We obtain the same asymptotic eigenvalue distribution for tridiagonal randomised twisted Toeplitz matrices as for structured random perturbations.

\begin{theorem}
\label{thm:randomised}
Consider the symbol defined on $[0,1] \times \T$
\[
a(x,z) = d(x) z^{-1} + b(x) + c(x) z,
\]
where $b,c,d$ are continuous complex-valued functions on $[0,1]$. Let $\mu_n$ be the sequence of eigenvalue-counting measures
\[
\mu_n = \frac{1}{n} \sum_{\lambda \in \mathrm{sp}(\mathcal{T}_n(a))} \delta_{\lambda}.
\]
Then $\mu_n$ converges weakly to the measure $\mu$ defined by
\[
\mu=\int_0^1 \alpha_x~dx,
\]
where $\alpha_x$ denotes the arcsine measure on the complex interval
\[
\bigl[b(x)-2\sqrt{d(x)c(x)},\, b(x)+2\sqrt{d(x)c(x)}\bigr].
\]
\end{theorem}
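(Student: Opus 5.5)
The plan is to prove the statement via logarithmic potentials, that is, the Hermitization method. Since $\mathcal{T}_n(a)$ is non-normal, moment computations alone cannot identify a limit in $\C$. By the standard criterion (Girko, in the form of Tao--Vu or Bordenave--Chafa\"i), it suffices to show two things:
\begin{equation*}
\frac1n \log\bigl|\det(\mathcal{T}_n(a)-z)\bigr| \longrightarrow U(z):=\int_0^1 \log|w_+(x,z)|\,dx \quad \text{in probability, for a.e. } z\in\C,
\end{equation*}
together with a uniform integrability bound on $\log$ of the singular values. Here $w_\pm(x,z)$ are the roots of $w^2-(b(x)-z)w+c(x)d(x)=0$, ordered so that $|w_+|\ge|w_-|$. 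A classical computation shows that $\log|w_+(x,z)|$ is the logarithmic potential, up to sign and normalization, of the arcsine measure $\alpha_x$ on $\bigl[b(x)-2\sqrt{d(x)c(x)},\,b(x)+2\sqrt{d(x)c(x)}\bigr]$. Integrating over $x$ and taking $\frac{1}{2\pi}\Delta U$ then recovers $\mu=\int_0^1\alpha_x\,dx$.

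First I reduce the problem. The characteristic polynomial of a tridiagonal matrix depends only on the diagonal entries and on the products of symmetric off-diagonal pairs. So $P_k(z)=\det(\mathcal{T}_k(a)-z)$ satisfies the three-term recurrence
\[
P_k=(b(x_{k,n})-z)P_{k-1}-c(x_{k-1,n})d(x_{k-1,n})P_{k-2}.
\]
Equivalently, $P_n$ is an entry of the product $A_n\cdots A_1$ of $2\times2$ transfer matrices with
\[
A_k=\begin{pmatrix} b(x_{k,n})-z & -c d(x_{k-1,n})\\ 1&0\end{pmatrix}.
\]
I then partition $[0,1]$ into $m$ blocks of length $1/m$. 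On each block, continuity of $b,c,d$ makes the $A_k$ close to a constant matrix $A(x_j,z)$, whose eigenvalues are $w_\pm(x_j,z)$. Since the order statistics satisfy $\#\{i: x_{i,n}\in I\}/n\to|I|$ almost surely, each block contains about $n/m$ factors.

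The upper bound $\limsup \frac1n\log|P_n|\le U(z)$ then follows from submultiplicativity of norms. On each block, $\|A^{n/m}\|\le C e^{(n/m)(\log|w_+|+\varepsilon)}$ whenever $|w_+|\neq|w_-|$, which holds off the segments, i.e. for a.e. $z$. Letting $m\to\infty$ gives the bound.

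The main obstacle is the matching lower bound. In the deterministic case it can genuinely fail: the product vector may align with the contracting direction $w_-$ of the next block, and this cancellation is exactly what moves the eigenvalues of $T_n(a)$ away from the segments. The role of the random sampling points is to prevent this alignment. My first attempt will be a block-by-block argument. At each block boundary, the direction of the current vector is a random quantity, because it depends on the gaps $x_{i+1,n}-x_{i,n}$. I will show that the probability of its lying within angle $e^{-\delta n/m}$ of the repelling direction of the next block is $o(1)$, uniformly in the block. Outside this bad event, the vector grows by at least $e^{(n/m)(\log|w_+|-\varepsilon)}$ across the block. A union bound over the $m$ blocks, followed by $m\to\infty$, then gives convergence in probability. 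To get the anti-concentration, I would exploit that the spacings are exchangeable with a smooth joint density. This should let me condition on all points except a few near each boundary and show that the resulting angle has a bounded density. Finally, for uniform integrability, I bound $\|\mathcal{T}_n(a)\|$ deterministically. For the smallest singular value, I use the same transfer-matrix structure: $\sigma_{\min}(\mathcal{T}_n-z)\ge e^{-o(n)}$ with high probability, which holds since $|P_n|$ is large and all entries are bounded.
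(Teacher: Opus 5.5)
Your route differs from the paper's. The paper freezes blocks of size $k=o(n)$. It uses Dvoretzky--Kiefer--Wolfowitz only to show that the sampling points of each block are uniformly close to $jk/n$ (Lemma~\ref{lmm:rblock}), and controls each block by the cone estimate behind Theorem~\ref{thm:approx}. It then glues blocks with Lemma~\ref{lmm:2block}, bounds the interface factors $1-\Theta_k^{(j)}$ away from $0$ (Lemma~\ref{lmm:ratios}), and finishes with a Riemann sum.

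Your diagnosis of the difficulty is sharper than that argument, and you cannot borrow the missing step from it:
\begin{itemize}
\item Lemma~\ref{lmm:ratios} and the final step only treat $z\notin\Sigma$. But $\Sigma\supseteq\{z:\exists x,\ |\xi_1(x,z)|=|\xi_2(x,z)|\}=\Xi(a)=\operatorname{supp}\mu$, which typically has positive area. For the symbol of Figure~\ref{fgr:unsym1} it is the square $\{|\operatorname{Re} z|\le1,\ |\operatorname{Im} z|\le1\}$. So $\Sigma$ is not Lebesgue-null.
\item Iterating Lemma~\ref{lmm:2block} produces the accumulated ratios $\det(R_{jk-1}(a)-zI)/\det(R_{jk}(a)-zI)$, not the block-local ones in \eqref{eq:globalblock}. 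After a point where the dominant root changes, these give $\Theta\approx1$, which is exactly your alignment cancellation.
\item Nothing there uses the $x_{i,n}$ beyond their closeness to the grid. The argument would therefore apply verbatim to the deterministic $T_n(a)$, whose eigenvalues (left panels of Figures~\ref{fgr:unsym1} and~\ref{fgr:unsym2}) do not fill $\Xi(a)$.
\end{itemize}

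The gap in your proposal is that the anti-concentration step carries the whole content of the theorem on $\Xi(a)$, yet it is only announced, and as formulated it does not work. There are two problems.

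First, the target direction is wrong if, as your set-up suggests, ``repelling direction of the next block'' means the $w_-$-eigenvector of the frozen matrix $A(x_{j+1},z)$. Then your claim that outside the bad event the vector grows by $e^{(n/m)(\log|w_+|-\varepsilon)}$ is false. Inside a block $A_k-A(x_{j+1},z)=O(\omega(1/m))$. So the most contracted (bottom right-singular) direction of the actual block product typically lies at angle of order $\omega(1/m)/g$ from the frozen one, with $g$ the spectral gap. That is far more than $e^{-\delta n/m}$, and a vector along it grows only at rate about $|w_-|$. This is what happens for analytic $b,c,d$ on the deterministic grid: after a switching point $x^*(z)$ with $|w_+|=|w_-|$, the vector keeps following the now-repelling branch for a macroscopic stretch. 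The target must therefore be the actual, random, contracted direction of the next block.

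Second, with the correct target you need anti-concentration of the incoming direction at scale $e^{-\delta n/m}$, conditionally on that block. Resampling a few order statistics near the boundary moves them by $O(1/n)$. The resulting change of direction is controlled by $\partial_xA(x,z)$, i.e.\ by $b'$ and $(cd)'$. Under the bare continuity hypothesis there is no lower bound: if $b,c,d$ are constant near the boundary, the angle is a deterministic function of the conditioned points. Your mechanism needs something like $b,c,d\in C^1$ with $\partial_x w_\pm(x,z)\neq0$ near $x^*(z)$. You also need care because $A_{k+1}$ contains $cd(x_{k,n})$, so boundary points enter both sides. Only boundaries just past switching points need this; elsewhere Lemma~\ref{lem:cone} works deterministically.

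Finally, drop the smallest-singular-value step. A large $|\det|$ with bounded entries only yields $\sigma_{\min}\ge e^{-O(n)}$, not $e^{-o(n)}$. The step is also unnecessary: $\frac1n\log|\det(\mathcal{T}_n(a)-zI)|$ is already the logarithmic potential of $\mu_n$. Convergence in probability for a.e.\ $z$, together with $\|\mathcal{T}_n(a)\|\le\|b\|_\infty+\|c\|_\infty+\|d\|_\infty$, suffices.
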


\begin{remark}
The support of the limiting distribution $\mu$ is the set
\[
\Xi(a) = \bigcup_{x \in [0,1]} \bigl[ b(x) - 2 \sqrt{d(x)c(x)},\, b(x) + 2 \sqrt{d(x)c(x)}\bigr].
\]
As we pointed out earlier, this set is in general different from the range of the symbol $a([0,1] \times \T)$ and it can even be disjoint from it. However, in the symmetric case (real or complex), namely when $c(x)=d(x)$ for all $x\in[0,1]$, the two sets $\Xi(a)$ and $a([0,1] \times \T)$ coincide. Indeed, in that case, for $x \in [0,1]$ and $t \in [0, 2 \pi]$,
\[
a(x,e^{it}) = b(x) + c(x)\bigl( e^{it} + e^{-it}\bigr) = b(x) + 2 c(x)\cos t.
\]
Since $\cos t$ ranges from $-1$ to $1$, it follows that $a([0,1] \times \T)=\Xi(a)$.
\end{remark}

The structure of the paper is as follows. In Section \ref{sec:prel} we formulate a number of important technical results. 
The proofs of Theorems \ref{thm:random} and \ref{thm:randomised} are given in Section \ref{sec:thmp}. Finally, in Section \ref{sec:band} we will formulate a conjecture regarding a generalization of our results to banded twisted Toeplitz matrices.

\section{Preliminary results}\label{sec:prel} 

The proofs of Theorems \ref{thm:random} and \ref{thm:randomised} are nearly identical, and they use a "frozen blocks" technique. In the following, we give an outline of the technique for Theorem \ref{thm:random}, but the same argument holds for Theorem \ref{thm:randomised} simply by replacing $R_n(a)$ with $\mathcal{T}_n(a)$. \par 
We consider a sequence $k(n)$ such that $k(n) = o(n)$, $k(n) \rightarrow \infty$, and $k(n)\sigma_n^2\to0$ as $ n \rightarrow \infty$. From now on, we will denote $k(n)=k$ for simplicity.
For any $j \in \N$, $1 \leq j \leq n/k$, we define $R_k^{(j)}$ as the $j$-th $k \times k$ principal block of $R_n(a)$, given by
\[ R_k^{(j)} = ((R_n(a))_{i,j})_{i,j=(j-1)k+1}^{jk}. \]
Then we consider the $k \times k$ Toeplitz matrix $T_k^{(j)}$ with symbol 
\[ a_{j,n}(z) = d(jk/n) z^{-1} + b(jk/n) + c(jk/n) z. \]

These Toeplitz matrices constitute our frozen blocks: our objective is to prove that the spectral distributions of $R_k^{(j)}$ and $T_k^{(j)}$ get arbitrarily close as $n \rightarrow \infty$, with probability $1-o(1)$. Once this is established, the proof of Theorem \ref{thm:random} will consist in a simple inductive argument. \par

Now we aim to formulate more precisely the result which allows us to approximate the eigenvalues of $R_k^{(j)}$ with those of $T_k^{(j)}$. In simple terms, we prove that the spectrum of a tridiagonal Toeplitz matrix is stable under small tridiagonal random perturbations. What we actually prove is an equivalent statement, namely, that the \emph{logarithmic potentials} of the empirical spectral measures associated to the perturbed sequence is arbitrarily close to the ones of the unperturbed one as $n$ approaches infinity, with probability $1-o(1)$. We recall that, given a $n \times n$ matrix $M_n$, the logarithmic potential of its empirical spectral measure can be expressed as
\[ \mathcal{L}_M(z) = \frac{1}{n} \log \vert \det(M_n - zI) \vert. \]
Assume $T_n$ is a sequence of $n \times n$ tridiagonal Toeplitz matrices with symbol $d_0 z^{-1} + b_0 + c_0 z$ and $d_0c_0 \neq 0$. Define the function
\[ \gamma(z) = \log \vert c_0 \vert + \log \max \big( \vert \lambda_1(z) \vert, \vert \lambda_2(z) \vert \big), \]
where $\lambda_1(z)$, $\lambda_2(z)$ are the solutions to the quadratic equation
\[ c_0 \lambda^2 + ( b_0 -z) \lambda + d_0= 0. \]
It is a well established result that, if $T_k$ is a sequence of $k \times k$ Toeplitz matrices with symbol $d_0 z^{-1} + b_0 + c_0 z$, then
\[ \lim_k \frac{1}{k} \log \vert \det (T_k - zI) \vert = \gamma(z). \]
Now we consider a sequence of random tridiagonal matrices
\[ P_n= 
\begin{pmatrix} 
b_1^{(n)} & c_1^{(n)} \\ 
d_1^{(n)} & b_2^{(n)} & c_2^{(n)} \\ 
& \ddots & \ddots & \ddots \\ 
&& d_{n-2}^{(n)} & b_{n-1}^{(n)} & c_{n-1}^{(n)} \\ 
&&& d_{n-1}^{(n)} & b_n^{(n)} \end{pmatrix}. \]

We want to prove the following:

\begin{theorem}
\label{thm:approx}
Define
\[ \gamma(z) = \lim_n \frac{1}{n} \log \vert \det(T_n - zI) \vert. \]
If
\[
\varepsilon_n=\max_{1\le j\le n}\Bigl( |d_j^{(n)}-d_0|+|b_j^{(n)}-b_0|+|c_j^{(n)}-c_0| \Bigr) \longrightarrow 0
\]
in probability, then for every $\epsilon>0$ we have
\begin{equation}
\label{eq:pbound}
\prob \left(
\bigl| \log |\det(P_n-zI)| - n\gamma(z) \bigr| \ge n\epsilon
\right) \longrightarrow 0
\qquad\text{as } n\to\infty.
\end{equation}
\end{theorem}

We will prove the theorem using diagonalization of the transfer matrices associated with the continuant recurrence. Let $D_k^{(n)}(z)$ be the characteristic polynomial of the $k\times k$ leading principal block of $P_n$:
\[
D_k^{(n)}(z):=\det(zI_k-P_k^{(n)}).
\]
Set
\[
D_{-1}^{(n)}(z):=0,\qquad D_0^{(n)}(z):=1.
\]
Then the standard continuant recurrence gives, for $k\ge1$,
\begin{equation}
\label{eq:continuant}
D_k^{(n)}(z)=\bigl(z-b_k^{(n)}\bigr)D_{k-1}^{(n)}(z)-d_{k-1}^{(n)}c_{k-1}^{(n)}D_{k-2}^{(n)}(z),
\end{equation}
where the second term is absent when $k=1$ because $D_{-1}^{(n)}(z)=0$. Since $c_0\neq0$ and $\varepsilon_n\to0$, for all large $n$ we have
\[
|c_j^{(n)}|\ge \frac{|c_0|}{2}>0 \qquad (1\le j\le n-1).
\]
Hence, for $k\ge1$, we may define
\[
Q_k^{(n)}(z):=\frac{D_k^{(n)}(z)}{\prod_{m=1}^{k} c_m^{(n)}},
\]
where, for convenience, we set $c_n^{(n)}:=c_0$ when $k=n$; equivalently, one may restrict the recurrence argument to $k\le n-1$, which is all we need in the sequel. Then \eqref{eq:continuant} becomes
\begin{equation}
\label{eq:Qrec}
Q_k^{(n)}(z)=\frac{z-b_k^{(n)}}{c_k^{(n)}}\,Q_{k-1}^{(n)}(z)-\frac{d_{k-1}^{(n)}}{c_k^{(n)}}\,Q_{k-2}^{(n)}(z), \qquad k\ge1,
\end{equation}
with
\[
Q_{-1}^{(n)}(z)=0,\qquad Q_0^{(n)}(z)=1.
\]

Introduce \[ X_k^{(n)}(z):= \binom{Q_k^{(n)}(z)}{Q_{k-1}^{(n)}(z)}. \]
Then 
\begin{equation}
\label{eq:transfer}
X_k^{(n)}(z)=A_k^{(n)}(z)X_{k-1}^{(n)}(z), \qquad
A_k^{(n)}(z):=
\begin{pmatrix}
\dfrac{z-b_k^{(n)}}{c_k^{(n)}} & -\dfrac{d_{k-1}^{(n)}}{c_k^{(n)}}\\[1.2ex]
1 & 0
\end{pmatrix},
\end{equation}

with 
\[ X_0^{(n)}(z)=\binom{1}{0}. \] 
For the unperturbed Toeplitz matrix, the transfer matrix is 
\[ A(z):= 
\begin{pmatrix} 
\dfrac{z-b_0}{c_0} & -\dfrac{d_0}{c_0}\\[1.2ex] 1 & 0 
\end{pmatrix}. \] 
Its eigenvalues are exactly $\xi_1(z),\xi_2(z)$. \par 
Now, we define the set
\[ \Gamma = \left \{ z \in \C: \vert \xi_1(z) \vert = \vert \xi_2(z) \vert \right \}. \]
The Schmidt-Spitzer theorem \cite{ScSp} tells us this is the limiting set of the eigenvalues of $T_n$. Fix a compact set 
\[ K\Subset \C\setminus\Gamma. \]
Since $|\xi_1(z)|\ne|\xi_2(z)|$ for $z\notin\Gamma$, after relabeling if necessary we may assume
\[ |\xi_1(z)|>|\xi_2(z)| \qquad (z\in K). \] Define \[ \rho(z):=|\xi_1(z)|,\qquad \eta(z):=|\xi_2(z)|. \]
Then the continuous functions $\rho,\eta$ satisfy
\[ m_K:=\inf_{z\in K}\rho(z)>0, \qquad g_K:=\inf_{z\in K}\bigl(\rho(z)-\eta(z)\bigr)>0. \]
Let 
\[ S(z):= \begin{pmatrix} \xi_1(z) & \xi_2(z)\\ 1 & 1 \end{pmatrix}. \] 
Since 
\[ \det S(z)=\xi_1(z)-\xi_2(z)\neq0 \qquad (z\in K), \]
the matrix $S(z)$ is invertible for $z\in K$, and 
\[ S(z)^{-1}A(z)S(z)= \begin{pmatrix} \xi_1(z) & 0\\ 0 & \xi_2(z) \end{pmatrix} =: \Lambda(z). \]
Define the conjugated matrices 
\[ B_k^{(n)}(z):=S(z)^{-1}A_k^{(n)}(z)S(z). \]
Then
\[ B_k^{(n)}(z)=\Lambda(z)+E_k^{(n)}(z), \]
where
\[ E_k^{(n)}(z) = S(z)^{-1} (A_k^{(n)}(z) - A(z)) S(z). \]
Since the entries of $A_k^{(n)}(z) - A(z)$ are dominated by $\varepsilon_n$, there exists a constant $C_K>0$ such that 
\begin{equation}
\label{eq:Ekbound}
\sup_{z\in K}\sup_{1\le k\le n}\max_{i,j}|(E_k^{(n)}(z))_{ij}| \le C_K \varepsilon_n. 
\end{equation}
Let 
\[ Y_k^{(n)}(z):=S(z)^{-1}X_k^{(n)}(z). \]
Then 
\[ Y_k^{(n)}(z)=B_k^{(n)}(z)Y_{k-1}^{(n)}(z), \qquad Y_0^{(n)}(z)=S(z)^{-1}\binom{1}{0} = \frac{1}{\xi_1(z)-\xi_2(z)} \binom{1}{-1}. \] 
For large $n$, by \eqref{eq:Ekbound}, 
\[ \delta_n:=C_K\varepsilon_n<\min\left\{\frac{m_K}{4},\,\frac{g_K}{4}\right\}. \]
Fix $z\in K$ and write
\[ Y_k^{(n)}(z)=\binom{u_k}{v_k}. \]
Consider the cone
\[ \mathcal{C}:=\left\{(u,v)\in\C^2:\ |v|\le |u|\right\}. \]
Since
\[ Y_0^{(n)}(z)=\frac{1}{\xi_1(z)-\xi_2(z)}\binom{1}{-1}, \]
we have $Y_0^{(n)}(z)\in\mathcal{C}$.
\begin{lemma}
\label{lem:cone}
For all sufficiently large $n$, for every $z\in K$ and every $k\ge1$: 
\begin{enumerate} 
\item $Y_k^{(n)}(z)\in\mathcal{C}$; 
\item \[ \bigl(\rho(z)-2\delta_n\bigr)|u_{k-1}| \le |u_k| \le \bigl(\rho(z)+2\delta_n\bigr)|u_{k-1}|. \] 
\end{enumerate}
Consequently, 
\begin{equation}
\label{eq:ugrowth}
\bigl(\rho(z)-2\delta_n\bigr)^n |u_0| \le |u_n| \le \bigl(\rho(z)+2\delta_n\bigr)^n |u_0|.
\end{equation}
\end{lemma}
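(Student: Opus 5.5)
Both parts of the lemma will be proved together by induction on $k$, using only the decomposition $B_k^{(n)}(z)=\Lambda(z)+E_k^{(n)}(z)$ and the entrywise bound \eqref{eq:Ekbound}. Write $E_k^{(n)}(z)=\begin{pmatrix} e_{11} & e_{12}\\ e_{21} & e_{22}\end{pmatrix}$ with $|e_{ij}|\le\delta_n$. Then the recursion $Y_k=B_kY_{k-1}$ reads
\[
u_k=(\xi_1+e_{11})u_{k-1}+e_{12}v_{k-1},\qquad v_k=e_{21}u_{k-1}+(\xi_2+e_{22})v_{k-1}.
\]
The base case $Y_0^{(n)}(z)\in\mathcal C$ was already noted. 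We also have $u_0\neq0$, since $u_0=1/(\xi_1-\xi_2)$.

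For the inductive step, assume $Y_{k-1}\in\mathcal C$, i.e.\ $|v_{k-1}|\le|u_{k-1}|$. The triangle inequality then gives
\[
(\rho-2\delta_n)|u_{k-1}|\le|u_k|\le(\rho+2\delta_n)|u_{k-1}|,
\]
which is item (2) at step $k$. It also gives $|v_k|\le(\eta+2\delta_n)|u_{k-1}|$. To recover the cone property at step $k$, it suffices that
\[
\eta+2\delta_n\le\rho-2\delta_n,\quad\text{i.e.}\quad 4\delta_n\le\rho-\eta .
\]
This holds because $\delta_n<g_K/4\le(\rho(z)-\eta(z))/4$ on $K$. Hence $|v_k|\le|u_k|$, so $Y_k\in\mathcal C$, and the induction closes. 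Iterating item (2) from $k=1$ to $n$ yields \eqref{eq:ugrowth}. The lower factor $\rho-2\delta_n\ge m_K/2>0$ is positive, so the lower bound is nontrivial and $u_k\neq0$ for all $k$.

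The only real obstacle is keeping the constants uniform in $z\in K$ and $k\le n$. Note that \eqref{eq:Ekbound} is uniform only because $S(z)$ and $S(z)^{-1}$ are bounded on $K$ (here $|\xi_1-\xi_2|$ is bounded below on the compact $K\subset\C\setminus\Gamma$) and because $|c_k^{(n)}|\ge|c_0|/2$. Moreover, ``sufficiently large $n$'' means precisely that $\delta_n=C_K\varepsilon_n<\min\{m_K/4,g_K/4\}$, which is the standing assumption stated before the lemma. Since $\varepsilon_n\to0$ only in probability, I would read the lemma deterministically on the event $\{C_K\varepsilon_n<\min(m_K,g_K)/4\}$, whose probability tends to $1$. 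Finally, I would check that $\mathcal C$ is the correct invariant set: the strict gap between $\rho$ and $\eta$ is what makes the cone contract into itself.
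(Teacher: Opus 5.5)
Your proof is correct and follows the paper's argument essentially verbatim: induction on $k$, triangle-inequality bounds on $u_k$ and $v_k$, and cone invariance from $4\delta_n<g_K\le\rho-\eta$. Your remark that the lemma should be read deterministically on the event $\{C_K\varepsilon_n<\min(m_K,g_K)/4\}$, whose probability tends to one, is a sensible clarification of the paper's ``for all sufficiently large $n$'', since $\varepsilon_n\to0$ only in probability.
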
 
\begin{proof} 
Write 
\[ B_k^{(n)}(z)=
\begin{pmatrix} \xi_1(z) & 0\\ 0 & \xi_2(z) 
\end{pmatrix} + 
\begin{pmatrix} e_{11} & e_{12}\\ e_{21} & e_{22} 
\end{pmatrix},
\qquad |e_{ij}|\le\delta_n. \]
Assume $(u_{k-1},v_{k-1})\in\mathcal{C}$, so $|v_{k-1}|\le |u_{k-1}|$. Then
\[ u_k=(\xi_1+e_{11})u_{k-1}+e_{12}v_{k-1}, \]
hence
\[ |u_k| \ge |\xi_1||u_{k-1}|-|e_{11}||u_{k-1}|-|e_{12}||v_{k-1}| \ge (\rho(z)-2\delta_n)|u_{k-1}|. \]
Similarly,
\[ |u_k| \le (\rho(z)+2\delta_n)|u_{k-1}|. \]
Also,
\[ v_k=e_{21}u_{k-1}+(\xi_2+e_{22})v_{k-1}, \]
so
\[ |v_k| \le \delta_n |u_{k-1}|+(\eta(z)+\delta_n)|v_{k-1}| \le (\eta(z)+2\delta_n)|u_{k-1}|. \]
Since
\[ \eta(z)+2\delta_n \le \rho(z)-2\delta_n \]
by the choice of $\delta_n<g_K/4$, we obtain
\[ |v_k|\le |u_k|. \]
Thus $Y_k^{(n)}(z)\in\mathcal{C}$, proving the cone invariance. Iterating the two-sided estimate for $|u_k|$ yields \eqref{eq:ugrowth}. 
\end{proof}
Now 
\[ X_n^{(n)}(z)=S(z)Y_n^{(n)}(z), \qquad Q_n^{(n)}(z)=\text{first coordinate of }X_n^{(n)}(z) =\xi_1(z)u_n+\xi_2(z)v_n. \]
Since $Y_n^{(n)}(z)\in\mathcal{C}$, we have $|v_n|\le |u_n|$, hence
\[ |Q_n^{(n)}(z)| \ge \bigl(\rho(z)-\eta(z)\bigr)|u_n| \ge g_K |u_n|, \]
and
\[ |Q_n^{(n)}(z)| \le \bigl(\rho(z)+\eta(z)\bigr)|u_n| \le M_K |u_n|, \]
where
\[ M_K:=\sup_{z\in K}\bigl(\rho(z)+\eta(z)\bigr)<\infty. \]
Also,
\[ |u_0|=\frac{1}{|\xi_1(z)-\xi_2(z)|} \]
is bounded above and below by positive constants on $K$. Combining these estimates with \eqref{eq:ugrowth}, we obtain constants $c_K,C_K'>0$ such that
\begin{equation}
\label{eq:qnbound}
c_K \bigl(\rho(z)-2\delta_n\bigr)^n \le |Q_n^{(n)}(z)| \le C_K' \bigl(\rho(z)+2\delta_n\bigr)^n, \qquad z\in K.
\end{equation}
Taking logarithms yields
\[ \log c_K + n \log ( \rho(z) - 2\delta_n) \leq \log \vert Q_n^{(n)}(z) \vert \leq n \log ( \rho(z) + 2 \delta_n ) + \log C_K', \]
hence
\[ \frac{1}{n}\log c_K + \log \left( 1 - \frac{2 \delta_n}{\rho(z)} \right) \leq  \frac{1}{n} \log \vert Q_n^{(n)}(z) \vert - \log \rho(z) \leq  \log \left( 1 + \frac{2 \delta_n}{\rho(z)} \right) + \frac{1}{n} \log C_K'. \]
Since $\delta_n \rightarrow 0$ in probability, we obtain that 
\[ \prob \left( \big \vert \log \vert Q_n^{(n)} \vert - n \log \rho(z) \big \vert  \geq n \epsilon \right) = o(1). \]
From the definition of $Q_k^{(n)}$, we have
\[ \frac1n\log|D_n^{(n)}(z)| = \frac1n\sum_{j=1}^n \log|c_j^{(n)}| + \frac1n\log|Q_n^{(n)}(z)|. \]
Because $\sup_j|c_j^{(n)}-c_0|\to0$, 
\[ \sup_{1\le j\le n}\left|\log|c_j^{(n)}|-\log|c_0|\right|\to0, \]
and therefore
\[ \frac1n\sum_{j=1}^n \log|c_j^{(n)}| \longrightarrow \log|c_0|. \]
It is then enough to point out that
\[ \gamma(z) = \log \vert c_0 \vert + \log \rho(z) \]
to conclude the proof of \eqref{eq:pbound}. \par 
Now that we have established that Theorem \ref{thm:approx} holds, we will use it to prove two Lemmas which will be key elements of our proof of Theorems \ref{thm:random} and \ref{thm:randomised}. In order to formulate said Lemmas, we need one more definition. For $x \in [0,1]$, let us consider the sequence $T_n(a_x)$ of matrices with symbol
\[ a_x(z) = d(x) z^{-1} + b(x) + c(x) z. \]
We then define 
\[ \gamma(x,z) = \lim_n \frac{1}{n} \log \vert \det (T_n(a_x) - z I) \vert. \]

We are now ready to prove the first Lemma, which we will use to prove Theorem \ref{thm:random}. We will use the notation established at the beginning of this Section for the "frozen blocks" $T_k^{(j)}$.

\begin{lemma}
\label{lmm:fblock}
Let $R_k^{(j)}$ be the $j$-th principal block of $R_n(a)$. Then, for a.e. $z \in \C$ and any $\varepsilon>0$,
\[
\prob \left( \bigl| \log |\det(R_k^{(j)} - zI)| - k \gamma(jk/n,z) \bigr|  \geq k \varepsilon \right) = o(1).
\]
\begin{proof}
Let us fix $z \in K$ and $j \in \N$, with $1 \leq j \leq n/k$. Denote by $\alpha_r^{(n)}$, $\beta_r^{(n)}$, and $\zeta_r^{(n)}$ the entries of $X_n$ on the lower, main, and upper diagonals, respectively. In light of Theorem \ref{thm:approx}, it is enough to prove that
\begin{multline*}
\varepsilon_k=\max_{1\le r \le k} \bigg(
\left| d\left( \frac{(j-1)k+r}{n} \right) + \sigma_n \alpha_r^{(n)} - d\left( \frac{jk}{n} \right) \right|
+\left| b\left( \frac{(j-1)k+r}{n} \right) + \sigma_n \beta_r^{(n)} - b\left( \frac{jk}{n} \right) \right| \\
+\left| c\left( \frac{(j-1)k+r}{n} \right) + \sigma_n \zeta_r^{(n)} - c\left( \frac{jk}{n} \right) \right|
\bigg) \longrightarrow 0
\end{multline*}
in probability.

By the triangle inequality,
\[
\left| b\left( \frac{(j-1)k+r}{n} \right) + \sigma_n \beta_r^{(n)} - b\left( \frac{jk}{n} \right) \right|
\le
\left| b\left( \frac{(j-1)k+r}{n} \right) - b\left( \frac{jk}{n} \right) \right|
+ \sigma_n |\beta_r^{(n)}|.
\]
Since $b$ is uniformly continuous on $[0,1]$, its modulus of continuity
\[
\omega_b(\delta):=\sup_{|x-y|\le\delta}|b(x)-b(y)|
\]
satisfies $\omega_b(\delta)\to0$ as $\delta\to0$. Hence, uniformly in $j$ and $1\le r\le k$,
\[
\left| b\left( \frac{(j-1)k+r}{n} \right) - b\left( \frac{jk}{n} \right) \right|
\le \omega_b\!\left(\frac{k}{n}\right)\xrightarrow[n\to\infty]{}0.
\]

For the random term, Chebyshev's inequality gives, for every $\epsilon>0$,
\[
\prob\!\left(\sigma_n |\beta_r^{(n)}|\ge \epsilon\right)
\le \frac{\sigma_n^2 \operatorname{Var}(\beta_r^{(n)})}{\epsilon^2}
\le \frac{C\sigma_n^2}{\epsilon^2},
\]
where $C>0$ is independent of $r$ and $n$. Therefore, by the union bound,
\[
\prob\!\left(\max_{1\le r\le k}\sigma_n |\beta_r^{(n)}|\ge \epsilon\right)
\le \sum_{r=1}^k \prob\!\left(\sigma_n |\beta_r^{(n)}|\ge \epsilon\right)
\le \frac{Ck\sigma_n^2}{\epsilon^2}.
\]
Since $k\sigma_n^2\to0$, it follows that
\[
\max_{1\le r\le k}\sigma_n |\beta_r^{(n)}| \longrightarrow 0
\qquad\text{in probability.}
\]

Applying the same argument to the lower and upper diagonals, we conclude that $\varepsilon_k\to0$ in probability. The claim now follows from Theorem \ref{thm:approx}.
\end{proof}
\end{lemma}

We now prove our second Lemma, which we will use to prove Theorem \ref{thm:randomised}.

\begin{lemma}
\label{lmm:rblock}
Let $\mathcal{T}_k^{(j)}$ be the $j$-th principal block of $\mathcal{T}_n(a)$. Then, for a.e. $z \in \C$ and any $\varepsilon>0$,
\[
\prob \left( \bigl| \log |\det(\mathcal{T}_k^{(j)} - zI)| - k \gamma(jk/n,z) \bigr|  \geq k \varepsilon \right) = o(1).
\]

\begin{proof}
As in the previous proof, it is enough to show that
\begin{multline*}
\varepsilon_k=\max_{1\le r \le k} \bigg(
\left| d\left( x_{(j-1)k+r,n} \right) - d\left( \frac{jk}{n} \right) \right|
+\left| b\left( x_{(j-1)k+r,n} \right) - b\left( \frac{jk}{n} \right) \right| \\
+\left| c\left( x_{(j-1)k+r,n} \right) - c\left( \frac{jk}{n} \right) \right|
\bigg) \longrightarrow 0
\end{multline*}
in probability. Since the functions $b$, $c$, and $d$ are uniformly continuous on $[0,1]$, it is enough to prove that
\[
\Delta_k := \max_{1 \leq r \leq k} \left| x_{(j-1)k+r,n}  - \frac{jk}{n} \right| \longrightarrow 0
\]
in probability. In fact, we prove almost sure convergence.

Let
\[
F_n(t):=\frac{1}{n}\sum_{r=1}^n \mathbbm{1}_{\{U_r^{(n)}\le t\}}
\]
be the empirical distribution function. Since the $U_r^{(n)}$ are continuously distributed, ties occur with probability $0$. Hence, almost surely, $x_{(j-1)k+r,n}$ is the $((j-1)k+r)$-th order statistic and
\[
F_n(x_{(j-1)k+r,n})=\frac{(j-1)k+r}{n}.
\]
Therefore,
\[
\left|x_{(j-1)k+r,n}-\frac{jk}{n}\right|
\le \left|x_{(j-1)k+r,n}-F_n(x_{(j-1)k+r,n})\right|+\frac{k - r}{n}
\le \sup_{t\in[0,1]}|F_n(t)-t|+\frac{k}{n}.
\]
Taking the maximum over $r$ yields
\[
\Delta_k \le \sup_{t\in[0,1]}|F_n(t)-t|+\frac{k}{n}.
\]

Now the Dvoretzky--Kiefer--Wolfowitz inequality tells us that for any $\epsilon>0$
\[ \prob \left( \sup_{t \in [0,1]} \vert F_n(t)- F(t) \vert > \epsilon \right) \leq 2 e^{-2n \epsilon^2}, \]
where $F(t)$ is the cumulative distribution function. Since our random variables are uniformly distributed, we have $F(t)=t$. It follows that, for every $\epsilon>0$ and all large $k$ and $n$,
\[
\prob(\Delta_k>\epsilon)
\le \prob\!\left(\sup_{t\in[0,1]}|F_n(t)-t|>\epsilon-\frac{k}{n}\right)
\le 2\exp\!\left(-2n\Bigl(\epsilon-\frac{k}{n}\Bigr)^2\right).
\]

The right-hand side is summable in $n$, so by the Borel--Cantelli lemma we obtain
\[
\Delta_{k(n)}\to0
\qquad\text{almost surely.}
\]
This concludes the proof.
\end{proof}
\end{lemma}

\section{Proof of Theorems \ref{thm:random} and \ref{thm:randomised}}
\label{sec:thmp}
We will now use Lemma \ref{lmm:fblock} to prove Theorem \ref{thm:random}. The proof for Theorem \ref{thm:randomised} using Lemma \ref{lmm:rblock} is exactly the same, so we will not repeat it. First we need a general preliminary result regarding the determinant of $2 \times 2$ block matrices.

\begin{lemma}
\label{lmm:2block}
Consider two matrices $M$ and $N$ of size $r \times r$ and $s \times s$ respectively, and call $M'$ the matrix obtained by removing the last row and column from $M$ and $N'$ the one obtained by removing the first row and column from $N$. Define also two rectangular matrices $X$ and $Y$ of rank 1 and size $s \times r$ and $r \times s$ respectively, whose entries are all $0$ except for $(X)_{1,r}=:x$ and $(Y)_{r,1}=:y$. Then if we define the $2 \times 2$ block matrix
\[ A = \begin{pmatrix}
M & Y \\
X & N
\end{pmatrix} \]
we have
\[ \det A  = \det M \det N - x y \det M' \det N' . \]
\begin{proof}
We prove the formula through the Laplace expansion of the determinant. For a matrix $T$, we denote by $\mu_{i,j}(T)$ the minor obtained by removing the $i$-th row and $j$-th column from $T$. Using Laplace expansion on the $r$-th row of $A$ we get
\[ \det A = \sum_{i=1}^{r+1} (-1)^{i+r} A_{r,i}~\mu_{r,i}(A)  . \]
Now we notice that $\mu_{r,i}(A) = \mu_{r,i}(M) \det N $ for $i \leq r$. Then we have
\[ \det A = \left( \sum_{i=1}^r (-1)^{i+r} M_{r,i}~\mu_{r,i}(M) \right) \det N + (-1)^{2r+1} y~\mu_{r,r+1}(A) = \det M \det N - y~\mu_{r,r+1}(A). \]
Then we observe that if we remove the $r$-th row and $r+1$-th column from $A$ we get the following block matrix:
\[ \begin{pmatrix}
M' & M^r & 0 \\
0 & x & N_1 \\
0 & 0 & N'
\end{pmatrix}, \]
where $M^r$ denotes the $r$-th column of $M$ without $M_{r,r}$ and $N_1$ denotes the first row of $N$ without $N_{1,1}$. It is then obvious to see that
\[ \mu_{r,r+1}(A) = x \det M' \det N', \]
and that concludes the proof.
\end{proof}
\end{lemma}

We are now ready to prove Theorem \ref{thm:random} using Lemmas \ref{lmm:fblock} and \ref{lmm:2block}, and an inductive argument.

\begin{proof}[Proof of Thm. \ref{thm:random}]
Throughout the proof, we denote with $R_k^{(j)'}$ the principal block of $R_k^{(j)}$ obtained by removing its first row and column. First, consider the matrix $R_{2k}(a)$, which can be viewed as a block matrix:
\[
R_{2k}(a)=
\begin{pmatrix}
R_k^{(1)} & C \\
D^T & R_k^{(2)}
\end{pmatrix},
\]
where $C$ and $D$ are $k\times k$ matrices whose entries are all $0$ except for $C_{k,1}=c(k/n)$ and $D_{k,1}=d(k/n)$. We can therefore apply Lemma \ref{lmm:2block}, obtaining
\[
\det (R_{2k}(a)-zI)=\det (R_k^{(1)}-zI)\det (R_k^{(2)}-zI)-c(k/n)d(k/n)\det (R_{k-1}^{(1)}-zI)\det (R_k^{(2)'}-zI).
\]
Taking modules and logarithms, we get
\begin{multline*}
\log |\det (R_{2k}(a)-zI)| = \\
\log |\det (R_k^{(1)}-zI)| + \log |\det (R_k^{(2)}-zI)| \\
+ \log \left|1-\frac{c(k/n)d(k/n)\det (R_{k-1}^{(1)}-zI)\det (R_k^{(2)'}-zI)}
{\det (R_k^{(1)}-zI)\det (R_k^{(2)}-zI)}\right|.
\end{multline*}
Set
\[
\Theta_k^{(1)}(z):=
\frac{c(k/n)d(k/n)\det (R_{k-1}^{(1)}-zI)\det (R_k^{(2)'}-zI)}
{\det (R_k^{(1)}-zI)\det (R_k^{(2)}-zI)}.
\]

In order to continue our proof, we need to show that the quantity $1 - \Theta_k^{(j)}(z)$ is bounded with probability $1-o(1)$. We do that in the following Lemma.

\begin{lemma}
\label{lmm:ratios}
Fix a compact set $K\Subset \C\setminus \Sigma$, where
\[
\Sigma:=\left\{z\in\C:\exists x\in[0,1]\text{ such that }|\xi_1(x,z)|=|\xi_2(x,z)|\right\}
\cup
\left\{z\in\C:\exists x\in[0,1]\text{ such that }\xi_1(x,z)^2=1\right\},
\]
and for each $x\in[0,1]$ the numbers $\xi_1(x,z),\xi_2(x,z)$ are the roots of
\[
\lambda^2-\frac{z-b(x)}{c(x)}\lambda+\frac{d(x)}{c(x)}=0
\]
ordered so that $|\xi_1(x,z)|>|\xi_2(x,z)|$ on $K$.

Then, for every fixed block index $j$ and every $\varepsilon>0$,
\[
\prob\!\left(
\left|
\frac{\det(R_{k-1}^{(j)}-zI)}{\det(R_k^{(j)}-zI)}
-\frac{1}{c(jk/n)\,\xi_1(jk/n,z)}
\right|>\varepsilon
\right)=o(1),
\]
and
\[
\prob\!\left(
\left|
\frac{\det(R_k^{(j)'}-zI)}{\det(R_k^{(j)}-zI)}
-\frac{1}{d(jk/n)\,\xi_1(jk/n,z)}
\right|>\varepsilon
\right)=o(1),
\]
uniformly for $z\in K$.

Consequently,
\[
\Theta_k^{(j)}(z):=
\frac{c(jk/n)d(jk/n)\det(R_{k-1}^{(j)}-zI)\det(R_k^{(j+1)'}-zI)}
{\det(R_k^{(j)}-zI)\det(R_k^{(j+1)}-zI)}
\]
satisfies
\[
\Theta_k^{(j)}(z)\longrightarrow
\frac{1}{\xi_1(jk/n,z)^2}
\qquad\text{in probability, uniformly for }z\in K.
\]
In particular, there exists $\delta_K>0$ such that
\[
\prob\!\left(|1-\Theta_k^{(j)}(z)|\ge \delta_K \right)=1-o(1)
\qquad (z\in K).
\]
\end{lemma}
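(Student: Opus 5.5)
The plan is to reuse the transfer-matrix and cone machinery from the proof of Theorem \ref{thm:approx}. That argument tracks not only the modulus of $Q_k^{(n)}$ but its direction in $\C^2$, and the direction encodes the ratio of consecutive continuants. Fix $j$ and $z\in K$, and write $x_j=jk/n$, $\xi_i=\xi_i(x_j,z)$. Apply the recurrence \eqref{eq:Qrec} to the block $R_k^{(j)}-zI$, with all entries compared to the frozen values $b(x_j),c(x_j),d(x_j)$. The perturbation size $\varepsilon_k$ tends to $0$ in probability, exactly as in the proof of Lemma \ref{lmm:fblock}. Since $K\cap\Sigma=\emptyset$ and $x\mapsto\xi_i(x,z)$ is continuous on the compact set $[0,1]\times K$, the constants $m_K,g_K$, and bounds on $\|S\|,\|S^{-1}\|$, can be taken uniform in $x\in[0,1]$ as well as in $z\in K$. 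This uniformity is what lets the estimates hold for every block index $j$ at once.

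\textbf{First ratio.} I will sharpen Lemma \ref{lem:cone} to show that the cone contracts. From $|v_m|\le(\eta+2\delta)|u_{m-1}|$ and $|u_m|\ge(\rho-2\delta)|u_{m-1}|$ we get $|v_m/u_m|\le(\eta+2\delta)/(\rho-2\delta)<1$. Feeding this back into the recurrence for $v_m$ gives
\[
|v_m/u_m|\le q\,|v_{m-1}/u_{m-1}|+C\delta, \qquad q<1 \text{ uniform on } K.
\]
Hence $|v_k/u_k|\le q^k+C'\delta_n$, which tends to $0$ in probability as $k\to\infty$. Now $X_k=SY_k$ gives $Q_k=\xi_1u_k+\xi_2v_k$ and $Q_{k-1}=u_k+v_k$. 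Therefore $Q_{k-1}/Q_k\to 1/\xi_1$, with the denominator bounded away from $0$ because $|Q_k|\ge g_K|u_k|$. Finally, $D_{k-1}/D_k=c_k\,Q_{k-1}/Q_k$ up to the normalising product, and $c_k\to c(x_j)$ in probability. Matching the normalising constant $\prod c_m$ then yields the first limit $1/(c(x_j)\xi_1)$.

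\textbf{Second ratio.} Here I expand from the other end: $\det(R_k^{(j)'}-zI)$ is the trailing $(k-1)\times(k-1)$ minor. I will run the backward continuant recurrence, i.e. the recurrence for the reversed (transposed) matrix, and normalise it by the entries $d_m$ instead of $c_m$. Its frozen transfer matrix has characteristic equation $d\lambda^2-(z-b)\lambda+c=0$. The same cone-contraction argument then gives that the ratio converges to $1/(d(x_j)\lambda_{\max})$, where $\lambda_{\max}$ is the dominant root of this equation.

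The step I expect to be the main obstacle is identifying $\lambda_{\max}$ in terms of $\xi_1,\xi_2$ so that it produces exactly the constant $1/(d\,\xi_1)$ of the statement. I would first try a direct Vieta computation. By Vieta, the roots of $d\lambda^2-(z-b)\lambda+c=0$ are $1/\xi_1$ and $1/\xi_2$, so $\lambda_{\max}=1/\xi_2$, and the careful check must be whether $1/(d\,\lambda_{\max})$ really reduces to the stated $1/(d\,\xi_1)$. If instead it comes out as $\xi_2/d=1/(c\,\xi_1)$, then $\Theta_k^{(j)}\to\xi_2/\xi_1$, and I would record that discrepancy explicitly. In either case the limit $L$ of $\Theta_k^{(j)}$ satisfies $|L|\ne1$, and so $L\ne1$. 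With the stated limit $1/\xi_1^2$ this uses the hypothesis $\xi_1^2\neq1$ in $\Sigma$. With $\xi_2/\xi_1$ it follows from $|\xi_2|<|\xi_1|$.

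\textbf{Consequences.} The convergence of $\Theta_k^{(j)}$ follows from the two ratio statements applied to blocks $j$ and $j+1$, since $x_{j+1}-x_j=k/n\to0$ and $\xi_1$ is uniformly continuous. The final claim then takes
\[
\delta_K:=\tfrac12\inf_{x\in[0,1],\,z\in K}|1-L(x,z)|>0,
\]
which is positive by compactness and continuity.
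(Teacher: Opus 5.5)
Your first ratio follows the paper's route and supplies the step the paper skips, but you should resolve your hedge on the second ratio against the statement. The paper asserts $Q_r^{(j)}\sim\kappa_j\xi_1^r$ ``by the cone argument''. Lemma~\ref{lem:cone}, however, only gives the growth bounds $(\rho\pm2\delta_n)^k$, which by themselves say nothing about $Q_{k-1}/Q_k$. Your contraction estimate $|t_m|\le q|t_{m-1}|+C\delta_n$ for $t_m=v_m/u_m$ (with $q<1$ uniform on $[0,1]\times K$ and $|t_0|=1$) is exactly what is needed, since $Q_{k-1}/Q_k=(1+t_k)/(\xi_1+\xi_2t_k)$ and $|\xi_1+\xi_2t_k|\ge g_K$.

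Two small corrections. First, the normalisation gives $D_{k-1}/D_k=c_k^{-1}Q_{k-1}/Q_k$, not $c_kQ_{k-1}/Q_k$. Second, since $\det(R_{k-1}^{(j)}-zI)/\det(R_k^{(j)}-zI)=-D_{k-1}/D_k$, the first limit is actually $-1/(c(x_j)\xi_1)$. The same sign appears in the second ratio and cancels in $\Theta_k^{(j)}$, so it is harmless downstream.

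\textbf{The second ratio.} Your computation is right and the statement is not. The reversed equation $d\lambda^2-(z-b)\lambda+c=0$ has roots $1/\xi_1,1/\xi_2$, and its dominant root is $1/\xi_2$. So, up to the sign above, the ratio tends to $\xi_2/d=1/(c\,\xi_1)$, which equals $1/(d\,\xi_1)$ only when $c=d$.

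This is forced. The continuant recurrence involves only the diagonal and the products $c_md_m$. So you can run the backward recurrence normalised by the $c_m$ as well; its frozen transfer matrix is again $A(z)$, and $1/(c\xi_1)$ comes out directly, with no Vieta step. Equivalently, for a frozen Toeplitz block, deleting the first row and column gives the same matrix as deleting the last, so the two ratios must share a limit.

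The paper's proof makes exactly the slip you suspected. It says reversal ``exchanges the roles of $c$ and $d$'' but keeps $\xi_1$ as the dominant root. Hence $\Theta_k^{(j)}\to cd/(c\xi_1)^2=\xi_2(x_j,z)/\xi_1(x_j,z)$, not $\xi_1(x_j,z)^{-2}$. For constant coefficients you can confirm this from $D_k=c^k(\xi_1^{k+1}-\xi_2^{k+1})/(\xi_1-\xi_2)$, which gives $1-\Theta=D_{2k}/D_k^2\to1-\xi_2/\xi_1$. So the second-ratio claim and the stated limit of $\Theta_k^{(j)}$ are false whenever $c\not\equiv d$.

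\textbf{What survives.} The proof of Theorem~\ref{thm:random} only uses the boundedness of $\Theta_k^{(j)}$ and the final non-vanishing estimate, and both still hold. With the correct limit, non-vanishing follows from $\sup_{[0,1]\times K}|\xi_2/\xi_1|<1$, by continuity and compactness. Your $\delta_K=\frac12\inf|1-L|$ then works with $L=\xi_2/\xi_1$, and the condition $\xi_1^2\neq1$ in $\Sigma$ is superfluous. Write the proof with these corrected limits rather than aiming at the stated ones.
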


\begin{proof}
We only prove the first ratio estimate; the second one is identical after reversing the order of the basis vectors in the block, which exchanges the lower and upper diagonals.

Let $x_j:=jk/n$. For the frozen Toeplitz matrix with symbol
\[
a_{x_j}(w)=d(x_j)w^{-1}+b(x_j)+c(x_j)w,
\]
the normalized continuants satisfy
\[
Q_r^{(j)}(z)\sim \kappa_j(z)\,\xi_1(x_j,z)^r
\]
on $K$, by the cone argument used in the proof of Theorem~\ref{thm:approx}. More precisely, the same argument gives
\[
\prob\!\left(
\left|
\frac{Q_{k-1}^{(j)}(z)}{Q_k^{(j)}(z)}-\frac{1}{\xi_1(x_j,z)}
\right|>\varepsilon
\right)=o(1)
\]
uniformly on $K$.

Since
\[
\det(R_k^{(j)}-zI)=\left(\prod_{\ell=1}^k c_\ell^{(j)}\right)Q_k^{(j)}(z),
\qquad
\det(R_{k-1}^{(j)}-zI)=\left(\prod_{\ell=1}^{k-1} c_\ell^{(j)}\right)Q_{k-1}^{(j)}(z),
\]
we obtain
\[
\frac{\det(R_{k-1}^{(j)}-zI)}{\det(R_k^{(j)}-zI)}
=
\frac{1}{c_k^{(j)}}\frac{Q_{k-1}^{(j)}(z)}{Q_k^{(j)}(z)}.
\]
Because the entries in the block are frozen at scale $k/n=o(1)$ and the perturbation amplitude is $\sigma_n\to0$, we have
\[
c_k^{(j)}\longrightarrow c(x_j)
\]
in probability, uniformly in the block. Hence
\[
\frac{\det(R_{k-1}^{(j)}-zI)}{\det(R_k^{(j)}-zI)}
\longrightarrow \frac{1}{c(x_j)\xi_1(x_j,z)}
\]
in probability, uniformly on $K$.

For the trailing minor, we apply the same argument to the reversed block. Reversal exchanges the roles of $c$ and $d$, so
\[
\frac{\det(R_k^{(j)'}-zI)}{\det(R_k^{(j)}-zI)}
\longrightarrow \frac{1}{d(x_j)\xi_1(x_j,z)}
\]
in probability.

Multiplying the two ratio limits yields
\[
\Theta_k^{(j)}(z)\longrightarrow \frac{1}{\xi_1(x_j,z)^2}
\]
in probability.

Finally, since $K\Subset \C\setminus\Sigma$, the continuous function
\[
(z,x)\mapsto 1-\xi_1(x,z)^{-2}
\]
does not vanish on $[0,1]\times K$. Therefore
\[
\delta_K:=\inf_{x\in[0,1],\,z\in K}
\left|1-\xi_1(x,z)^{-2}\right|>0.
\]
Hence
\[
\prob\!\left(|1-\Theta_k^{(j)}(z)|\ge \delta_K/2\right)=1-o(1),
\]
which is the desired non-vanishing estimate.
\end{proof}

Applying Lemma \ref{lmm:ratios} with $j=1$, we obtain
\[
\Theta_k(z)\longrightarrow \frac{1}{\xi_1(k/n,z)^2}
\]
in probability, for $z\in K$. Since $K\Subset \C\setminus\Sigma$, there exists $\delta_K>0$ such that
\[
\prob\bigl(|1-\Theta_k(z)|\ge \delta_K\bigr)=1-o(1).
\]
In particular,
\[
\log |1-\Theta_k(z)|=O(1)
\]
with probability $1-o(1)$, uniformly for $z\in K$. Therefore
\[
\frac{1}{2k}\log |\det (R_{2k}(a)-zI)|
=
\frac{1}{2k}\log |\det (R_k^{(1)}-zI)|
+
\frac{1}{2k}\log |\det (R_k^{(2)}-zI)|
+
O(1/k)
\]
with probability $1-o(1)$.

Inserting the limits of the logarithmic potentials from Lemma \ref{lmm:fblock}, we conclude that
\[
\prob\left(
\left|
\log |\det (R_{2k}(a)-zI)|-k(\gamma_1+\gamma_2)
\right|\ge 2k\varepsilon
\right)=o(1).
\]

Now let $m:=\lfloor n/k\rfloor$, so that $n=mk+r$ with $0\le r<k$. Repeated application of Lemma \ref{lmm:2block} yields
\begin{equation}
\label{eq:globalblock}
\log |\det(R_{mk}(a)-zI)|
=
\sum_{j=1}^m \log |\det(R_k^{(j)}-zI)|
+
\sum_{j=1}^{m-1}\log |1-\Theta_k^{(j)}(z)|,
\end{equation}
where
\[
\Theta_k^{(j)}(z):=
\frac{c(jk/n)d(jk/n)\det(R_{k-1}^{(j)}-zI)\det(R_k^{(j+1)'}-zI)}
{\det(R_k^{(j)}-zI)\det(R_k^{(j+1)}-zI)}.
\]
By Lemma \ref{lmm:ratios}, for every fixed $j$,
\[
\Theta_k^{(j)}(z)\longrightarrow \frac{1}{\xi_1(jk/n,z)^2}
\]
in probability. Since $z\in K\Subset\C\setminus\Sigma$, there exists $\delta_K>0$ independent of $j$ such that
\[
\prob\bigl(|1-\Theta_k^{(j)}(z)|\ge \delta_K\bigr)=1-o(1),
\qquad j=1,\dots,m-1.
\]
Hence each interface term is $O(1)$ with probability $1-o(1)$, and therefore
\[
\sum_{j=1}^{m-1}\log |1-\Theta_k^{(j)}(z)| = O_{\prob}(m).
\]
Since $m\sim n/k$, it follows that
\[
\frac{1}{n}\sum_{j=1}^{m-1}\log |1-\Theta_k^{(j)}(z)| = O_{\prob}(1/k)=o_{\prob}(1).
\]

Moreover, by Lemma \ref{lmm:fblock}, for each fixed $j$,
\[
\frac{1}{k}\log |\det(R_k^{(j)}-zI)|-\gamma(jk/n,z)\longrightarrow 0
\]
in probability. Summing over $j=1,\dots,m$, we obtain
\[
\sum_{j=1}^m \log |\det(R_k^{(j)}-zI)|
=
k\sum_{j=1}^m \gamma(jk/n,z)+o_{\prob}(mk).
\]
Combining this with \eqref{eq:globalblock}, we get
\[
\log |\det(R_{mk}(a)-zI)|
=
k\sum_{j=1}^m \gamma(jk/n,z)+o_{\prob}(n).
\]

Finally, the remaining block has size $r<k$, hence its contribution to the logarithmic potential is negligible after division by $n$. Therefore
\[
\log |\det(R_n(a)-zI)|
=
k\sum_{j=1}^m \gamma(jk/n,z)+o_{\prob}(n).
\]
Since
\[
\frac{k}{n}\sum_{j=1}^m \gamma(jk/n,z)
\]
is a Riemann sum and $mk/n\to1$, we obtain
\[
\frac{1}{n}\log |\det(R_n(a)-zI)| \longrightarrow \int_0^1 \gamma(t,z)\,dt
\]
in probability, for every $z\in\C\setminus\Sigma$. Since $\Sigma$ has planar Lebesgue measure zero, the convergence holds for a.e. $z\in\C$. By the standard criterion based on convergence of logarithmic potentials, this yields weak convergence of the empirical spectral measures.
\end{proof}

\section{Conjecture for banded case}
\label{sec:band}
Until now, we treated exclusively structured perturbations of tridiagonal matrices. A natural extension of our results is to consider banded twisted Toeplitz matrices with banded random perturbations. In this Section we will formulate a natural extension of Theorem \ref{thm:random} to the banded case and present numerical experiments which support our conjecture. \par 
Define the following symbol on $[0,1] \times \T$:
\[ a(x,z) = \sum_{j=-q}^p a_j(x) z^j, \]
where $p,q \geq 1$ and the functions $a_j(x)$ are continuous on $[0,1]$. We define a structured random perturbation of the twisted Toeplitz matrix $T_n(a)$ as the random matrix
\[ R_n(a)= T_n(a) + \sigma_n X_n, \]
where $X_n$ is a random banded matrix, with $(X_n)_{i,j} = 0$ for $i-j < - q$ and $i-j >p$, while for $-q \leq i-j \leq p$ the entries $(X_n)_{i,j}$ are i.i.d. random variables with mean $0$ and finite variance. The sequence $\sigma_n \subset \R_{>0}$ is such that $\sigma_n \rightarrow 0$. The spectral asymptotics of $R_n(a)$ will be formulated in terms of the limiting spectra of the "frozen" Toeplitz matrices. \par 
If we fix a $x \in [0,1]$ the function $a_x := a(x,\cdot)$ is the symbol of a banded Toeplitz matrix; we denote by $T_n(a_x)$ the $n \times n$ Toeplitz matrix associated to the symbol $a_x$. The Schmidt-Spitzer theorem gives us an expression for the limiting set of the eigenvalues of $T_n(a_x)$, as follows: denote with $z_1(\lambda), \cdots, z_{p+q}(\lambda)$ the roots of the polynomial $z^q(a_x(z) - \lambda)$ with respect to $z$, ordered such that
\[
|z_1(\lambda)|\le \cdots \le |z_{p+q}(\lambda)|.
\]
Then the eigenvalues of $T_n(a_x)$ distribute, as $n \rightarrow \infty$, on the set
\[ \Lambda(a_x) = \{ \lambda \in \C: \vert z_q(\lambda) \vert = \vert z_{q+1}(\lambda) \vert \}. \]
Moreover, a theorem of Hirschman \cite{Hir} proves that the sequence of empirical spectral measures of $T_n(a_x)$ is weakly convergent, and even gives an explicit formula for its limiting density, which of course has support on $\Lambda(a_x)$. We then consider
\[ \nu_{n,x} = \frac{1}{n} \sum_{\lambda \in sp(T_n(a_x))} \delta_\lambda, \]
and we denote with $\nu_x$ the weak limit of the sequence $\nu_{n,x}$. We are now ready to formulate our first conjecture, which is a direct generalization of Theorem \ref{thm:random}.

\begin{conjecture}
\label{con:banded}
Consider the symbol on $[0,1] \times \T$
\[ a(x,z) = \sum_{j=-q}^p a_j(x) z^j, \]
where for all $j$ $a_j(x)$ are continuous complex-valued functions on $[0,1]$. Let $\mu_n$ be the sequence of eigenvalue-counting measures
\[ \mu_n = \frac{1}{n} \sum_{\lambda \in \text{sp}(R_n(a))} \delta_{\lambda}. \]
Then $\mu_x$ converges weakly to the measure
\[ \mu = \int_0^1 \nu_x~dx. \]
\end{conjecture}

Just as we did in the tridiagonal case, we can consider a different random set-up, where instead of considering a structured perturbation, we consider random sampling points for the functions in the symbol.

\begin{conjecture}
Consider the symbol on $[0,1] \times \T$
\[ a(x,z) = \sum_{j=-q}^p a_j(x) z^j, \]
where for all $j$ $a_j(x)$ are continuous complex-valued functions on $[0,1]$. Let $\mu_n$ be the sequence of eigenvalue-counting measures
\[ \mu_n = \frac{1}{n} \sum_{\lambda \in \text{sp}(\mathcal{T}_n(a))} \delta_{\lambda}, \]
where $\mathcal{T}_n(a)$ is a randomised twisted Toeplitz matrix as defined in \eqref{eq:rkms}. 
Then $\mu_n$ converges weakly to the measure $\mu$ defined by
\[
\mu=\int_0^1 \nu_x~dx.
\]

\end{conjecture}

\begin{remark}
The support of the limiting measure $\mu$ is the set
\[ \Xi(a) = \bigcup_{x \in [0,1]} \Lambda(a_x), \]
which in general is different from the image of the symbol $a([0,1] \times \T)$. As we mentioned in the Introduction, it has been proven in \cite{Noel} that for a full-matrix random perturbation, the eigenvalues will distribute according to the push-forward of the Lebesgue measure on $[0,1]\times \T$ through the symbol $a$. The support of such a measure is exactly $a([0,1] \times \T)$.
\end{remark}

We will now present some numerical examples which are consistent with Conjecture \ref{con:banded}.

\begin{example}
\label{ex:4diag}
Let us consider the tetradiagonal twisted Toeplitz matrix with symbol
\[ a(x,z) = i \left( 1 - \frac{1}{2}x \right) z^{-1} + 1-x + \frac{i}{4}(1+x)z+ \left( 1+ \frac{1}{2} \right) z^2. \]
In Figure \ref{fgr:4diag}, on the left we plot the eigenvalues of the unperturbed $T_{500}(a)$ in green, the eigenvalues of $R_{500}(a)$ in red, and the range of the symbol in blue. One can see how the set $\Xi(a)$ in this case is not contained in $a([0,1] \times \T)$, but it is contained in the extended range. \par 
On the right we plot the image of the symbol once again, and in red the eigenvalues of the Toeplitz matrices $T_{300}(a_x)$ for $x=0,\frac{1}{4}, \frac{1}{2}, \frac{3}{4}, 1$. This computation shows how the eigenvalues of $R_n(a)$ seem to be distributed according to a measure with support $\Xi(a)$.

\begin{figure}
\centering
\includegraphics[width=0.49\linewidth]{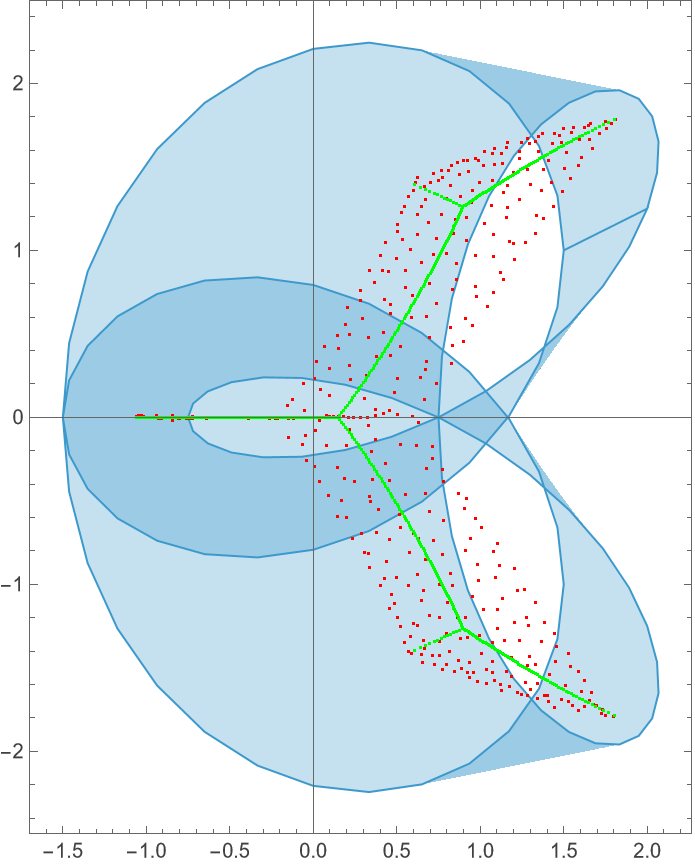}
\includegraphics[width=0.49\linewidth]{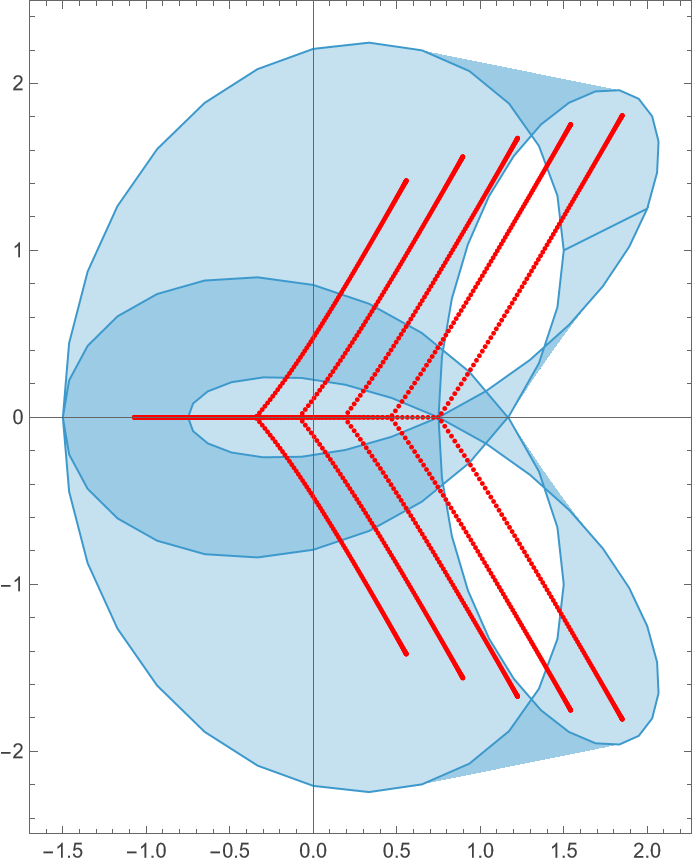}
\caption{Numerical results in Example \ref{ex:4diag}}
\label{fgr:4diag}
\end{figure}
\end{example}

\begin{example}
Our second example involves the pentadiagonal twisted Toeplitz matrix with symbol
\[ a(x,z) = -\frac{1}{5}(x+i) z^{-2} + i(\frac{3}{4}+x) z^{-1} + \frac{i}{3} x +(1-2x) z + \frac{x^2-1}{5} z^2. \]
\label{ex:5diag}
\begin{figure}[b]
\centering
\includegraphics[width=0.49\linewidth]{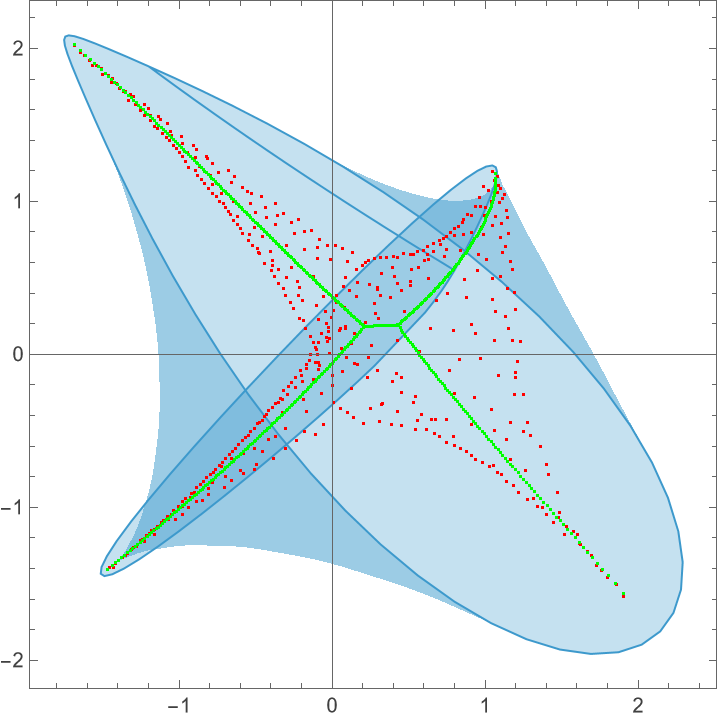}
\includegraphics[width=0.49\linewidth]{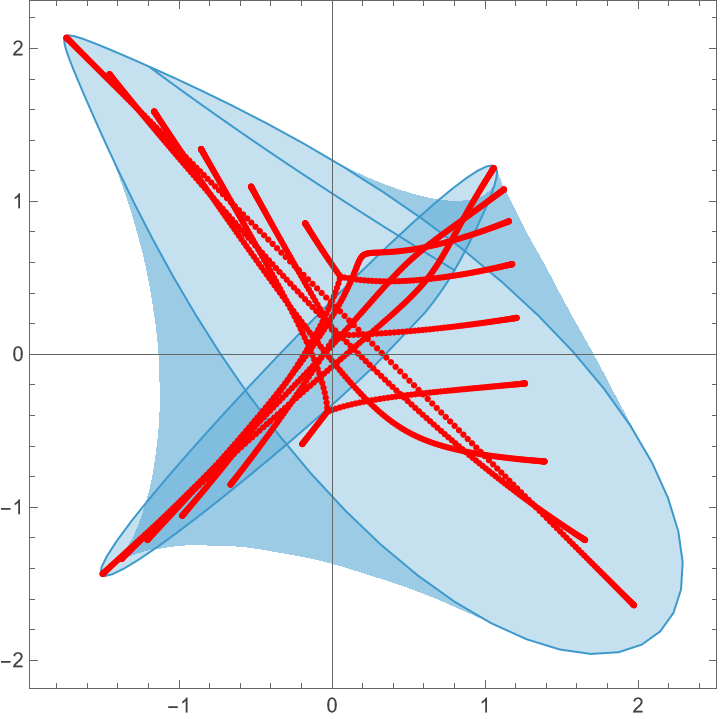}
\caption{ Numerical results in Example \ref{ex:5diag} }
\label{fgr:5diag}
\end{figure}
\end{example}
The color scheme in Figure \ref{fgr:5diag} is the same as in the previous example. One can see the set $\Xi(a)$ seems significantly more complicated in this case. In the image on the right we plot the spectrum of nine "frozen" Toeplitz matrices, revealing how the set in which the eigenvalues distribute still seems to be $\Xi(a)$, even if it is not completely clear due to its more complex structure.

\end{document}